\newtheorem{theorem}{Theorem}
\newtheorem{definition}[theorem]{Definition}
\newtheorem{lemma}[theorem]{Lemma}
\newenvironment{proof}[1][Proof]{\noindent\textbf{#1.} }{\ \rule{0.5em}{0.5em}}
\begin{document}

\title{\textbf{On the properties of circulant matrices involving Generalized
Tribonacci and Generalized Tribonacci-Lucas numbers}}
\author{Nazmiye Yilmaz\thanks{%
e mail: \ \textit{nzyilmaz@selcuk.edu.tr, yyazlik@nevsehir.edu.tr,
ntaskara@selcuk.edu.tr}} \\
%EndAName
Department of Mathematics, Science Faculty, \\
Selcuk University, Campus, Konya, Turkey \and Yasin Yazlik \\
%EndAName
Department of Mathematics, Faculty of Science and Letters, \\
Nevsehir Haci Bektas Veli University, Nevsehir, Turkey \and Necati Taskara \\
%EndAName
Department of Mathematics, Science Faculty, \\
Selcuk University, Campus, Konya, Turkey }
\maketitle

\begin{abstract}
In this paper, firstly, we define the Generalized Tribonacci-Lucas numbers.
In addition, by also defining circulant matrices $C_{n}(G)$ and $C_{n}(S)$
whose entries are Generalized Tribonacci and Generalized Tribonacci-Lucas
numbers, we compute spectral norms and determinants of these matrices.

\textit{Keywords}: Generalized Tribonacci numbers, Generalized
Tribonacci-Lucas numbers, circulant matrix, norm, determinant.

\textit{AMS Classification}: 11B39, 15A18, 15A60.
\end{abstract}

\section{Introduction}

Shannon and Horadam \cite{5} defined Generalized Tribonacci sequence $%
\left\{ G_{n}\right\} _{n\in 
%TCIMACRO{\U{2115} }%
%BeginExpansion
\mathbb{N}
%EndExpansion
}$\ as 
\begin{equation}
G_{n+3}=PG_{n+2}+QG_{n+1}+RG_{n},\ \ \ \ G_{0}=0,~G_{1}=0,\ G_{2}=1,
\label{1}
\end{equation}%
where $P,\ Q,$ and $R$ are arbitrary integers.

If the characteristic equation%
\begin{equation*}
x^{3}-Px^{2}-Qx-R=0
\end{equation*}%
has three distinct real roots, suppose that they are given by $\alpha ,\
\beta ,\ \gamma $. According to the general theory of recurrence relations, $%
G_{n}$ can be represented by

\begin{equation}
G_{n}=\frac{\alpha ^{n}}{\left( \alpha -\beta \right) \left( \alpha -\gamma
\right) }+\frac{\beta ^{n}}{\left( \beta -\alpha \right) \left( \beta
-\gamma \right) }+\frac{\gamma ^{n}}{\left( \gamma -\alpha \right) \left(
\gamma -\beta \right) }.  \label{1.2}
\end{equation}%
The following relations hold: 
\begin{equation}
\alpha +\beta +\gamma =P,\;\alpha \beta \gamma =R,\ \alpha \beta +\beta
\gamma +\alpha \gamma =-Q,  \label{1.1}
\end{equation}%
where $\alpha $, $\beta $ and $\gamma $ are roots of equation (\ref{1}%
).\qquad

Recently, the norms of some special matrices were studied. For instance,
Solak \cite{7}, defined \ the $n\times n$ circulant matrices with Fibonacci
and Lucas numbers entries. Additionally, he investigated the upper and lower
bounds of spectral norms of the circulant matrices. Shen and Cen, in \cite{6}%
, have found upper and lower bounds for the spectral norms of $r$-circulant
matrices and obtained some bounds for the spectral norms of Kronecker and
Hadamard products of these matrices. In \cite{4}, the norms, eigenvalues and
determinants of some matrices related with special numbers were studied. In 
\cite{3}, Ipek obtained the spectral norms of circulant matrices with
Fibonacci and Lucas numbers entries. In \cite{8}, the authors computed the
spectral norm, eigenvalues and determinant of circulant matrix involving the
Generalized $k$-Horadam numbers.

Here, we define the Generalized Tribonacci-Lucas numbers and present
eigenvalues, spectral norms and determinants of circulant matrices with
Generalized Tribonacci and Generalized Tribonacci-Lucas numbers.

\section{Circulant Matrix with Generalized Tribonacci numbers}

From \cite{1}, we first remind that the circulant matrix $C=\left[ c_{ij}%
\right] \in M_{n,n}\left( 
%TCIMACRO{\U{2102} }%
%BeginExpansion
\mathbb{C}
%EndExpansion
\right) $ is defined by 
\begin{equation*}
c_{ij}=\left\{ 
\begin{array}{c}
c_{j-i},\ \ \ \ \ \ \ j\geq i \\ 
c_{n+j-i},\ \ \ j<i%
\end{array}%
.\right.
\end{equation*}%
From \cite{2}, we further remind that, for a matrix $X=\left[ x_{i,j}\right]
\in M_{m,n}(%
%TCIMACRO{\U{2102} }%
%BeginExpansion
\mathbb{C}
%EndExpansion
),$ the spectral norm of $X$ is given by 
\begin{equation*}
\left\Vert X\right\Vert _{2}=\sqrt{\max\limits_{1\leq i\leq n}\lambda
_{i}\left( X^{\ast }X\right) }
\end{equation*}%
where $X^{\ast }$ is the conjugate transpose of matrix $X.$

In this section, we define eigenvalues, the spectral norm and determinant of
the circulant matrix with the Generalized Tribonacci numbers entries. In
order to do that, we can also define \textit{the circulant matrix} as
follows.

\begin{definition}
A $n\times n$ circulant matrix with Generalized Tribonacci numbers entries
is defined by 
\begin{equation}
C_{n}(G)=\left[ 
\begin{array}{ccccc}
G_{1} & G_{2} & G_{3} & \cdots & G_{n} \\ 
G_{n} & G_{1} & G_{2} & \cdots & G_{n-1} \\ 
G_{n-1} & G_{n} & G_{1} & \cdots & G_{n-2} \\ 
\vdots & \vdots & \vdots & \ddots & \vdots \\ 
G_{2} & G_{3} & G_{4} & \cdots & G_{1}%
\end{array}%
\right] .  \label{1.3}
\end{equation}
\end{definition}

\begin{lemma}
\bigskip \lbrack 1] Let $A=circ(a_{0},a_{1},\cdots a_{n-1})$ be a $n\times n$
circulant matrix. Then we have%
\begin{equation*}
\lambda _{j}\left( A\right) =\sum_{k=0}^{n-1}a_{k}w^{-jk},
\end{equation*}%
where $w=e^{\frac{2\pi i}{n}},\ i=\sqrt{-1},\ j=0,1,\ldots n-1.$
\end{lemma}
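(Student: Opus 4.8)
The plan is to prove the identity by exhibiting, for each index $j$, an explicit eigenvector of $A$ whose associated eigenvalue is exactly $\sum_{k=0}^{n-1} a_k w^{-jk}$, and then arguing that these $n$ eigenvectors account for the entire spectrum. Working with $0$-based indices, the entry in row $i$ and column $m$ of $A=circ(a_0,a_1,\ldots,a_{n-1})$ is $a_{(m-i)\bmod n}$, directly from the circulant definition recalled above. For fixed $j\in\{0,1,\ldots,n-1\}$ I would take the candidate eigenvector $v_j=(1,w^{-j},w^{-2j},\ldots,w^{-(n-1)j})^{T}$, i.e. $(v_j)_m=w^{-jm}$.

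First I would compute the $i$-th component of $A v_j$,
\begin{equation*}
(A v_j)_i=\sum_{m=0}^{n-1} a_{(m-i)\bmod n}\,w^{-jm}.
\end{equation*}
The key step is the reindexing $\ell=(m-i)\bmod n$: since $w^{n}=1$ we have $w^{-jm}=w^{-j(i+\ell)}=w^{-ji}w^{-j\ell}$ irrespective of the reduction modulo $n$, so the sum factors as
\begin{equation*}
(A v_j)_i=w^{-ji}\sum_{\ell=0}^{n-1} a_\ell\,w^{-j\ell}=\Big(\sum_{\ell=0}^{n-1} a_\ell w^{-j\ell}\Big)(v_j)_i .
\end{equation*}
This holds for every $i$, so $v_j$ is an eigenvector of $A$ with eigenvalue $\lambda_j=\sum_{k=0}^{n-1} a_k w^{-jk}$, as claimed.

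It then remains to confirm that the $\lambda_j$ are genuinely the $n$ eigenvalues of $A$. For this I would observe that $v_0,\ldots,v_{n-1}$ are the columns of a Vandermonde matrix built from the distinct nodes $1,w^{-1},\ldots,w^{-(n-1)}$, hence linearly independent; the $n\times n$ matrix $A$ therefore admits a full eigenbasis with the exhibited eigenvalues, and none is missed.

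The main obstacle I anticipate is purely bookkeeping: pinning down the convention so that the exponent comes out as $-jk$ rather than $+jk$, which is entirely fixed by the paper's rule $c_{ij}=c_{(j-i)\bmod n}$ together with the choice $(v_j)_m=w^{-jm}$; the reindexing of the cyclic sum, while elementary, must be justified by the periodicity $w^{n}=1$ rather than by an honest change of summation range. An alternative route avoiding explicit eigenvectors would be to write $A=\sum_{k=0}^{n-1} a_k \pi^{k}$, where $\pi=circ(0,1,0,\ldots,0)$ is the basic circulant permutation matrix satisfying $\pi^{n}=I$; since the eigenvalues of $\pi$ are the roots $w^{-j}$ and all powers of $\pi$ share the same eigenvectors, spectral mapping yields $\lambda_j(A)=\sum_{k=0}^{n-1} a_k (w^{-j})^{k}$ at once.
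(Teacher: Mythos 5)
Your argument is correct: the diagonalization by the Fourier vectors $v_j$ with $(v_j)_m=w^{-jm}$, the reindexing justified by $w^{n}=1$, and the Vandermonde argument for completeness of the eigenbasis together give exactly the stated formula, and the sign convention matches the paper's definition $c_{ij}=c_{(j-i)\bmod n}$. The paper itself offers no proof of this lemma --- it is quoted from Davis's book on circulant matrices --- and what you have written is precisely the standard proof given there, so there is nothing further to compare.
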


\begin{lemma}
\lbrack 2] Let $A$ be an $n\times n$ matrix with eigenvalues $\lambda
_{1},\lambda _{2},\ldots ,\lambda _{n}.$ Then, $A$ is a normal matrix if and
only if the eigenvalues of $AA^{\ast }\ $are $\left\vert \lambda
_{1}\right\vert ^{2},\left\vert \lambda _{2}\right\vert ^{2},\ldots
,\left\vert \lambda _{n}\right\vert ^{2},$ where $A^{\ast }$ is the
conjugate transpose of the matrix $A$.
\end{lemma}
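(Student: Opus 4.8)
The plan is to prove both implications of the stated equivalence, leaning on two standard structural facts about complex matrices: the spectral theorem, which says that a normal matrix is unitarily diagonalizable, and Schur's triangularization theorem, which says that an arbitrary square matrix is unitarily similar to an upper triangular one whose diagonal carries its eigenvalues. The bridge between the two directions will be a comparison of $\operatorname{tr}(AA^{\ast})$ computed in two different ways.

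For the forward direction, I would suppose $A$ is normal. By the spectral theorem there is a unitary $U$ with $A = U D U^{\ast}$, where $D = \operatorname{diag}(\lambda_{1},\ldots,\lambda_{n})$. Then $A^{\ast} = U D^{\ast} U^{\ast}$, and hence
\[
AA^{\ast} = U D D^{\ast} U^{\ast} = U\,\operatorname{diag}\!\left(|\lambda_{1}|^{2},\ldots,|\lambda_{n}|^{2}\right) U^{\ast}.
\]
Since $AA^{\ast}$ is thereby unitarily similar to a diagonal matrix with entries $|\lambda_{i}|^{2}$, its eigenvalues are exactly $|\lambda_{1}|^{2},\ldots,|\lambda_{n}|^{2}$, as claimed.

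For the converse, I would invoke Schur's theorem to write $A = U T U^{\ast}$ with $U$ unitary and $T$ upper triangular, the diagonal of $T$ being $\lambda_{1},\ldots,\lambda_{n}$. Computing the trace of $AA^{\ast}$ through this factorization gives
\[
\operatorname{tr}(AA^{\ast}) = \operatorname{tr}(T T^{\ast}) = \sum_{i,j} |t_{ij}|^{2} = \sum_{i=1}^{n} |\lambda_{i}|^{2} + \sum_{i<j} |t_{ij}|^{2}.
\]
On the other hand, the hypothesis that the eigenvalues of $AA^{\ast}$ are precisely $|\lambda_{1}|^{2},\ldots,|\lambda_{n}|^{2}$ forces $\operatorname{tr}(AA^{\ast}) = \sum_{i=1}^{n}|\lambda_{i}|^{2}$, since the trace is the sum of the eigenvalues. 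Equating the two expressions yields $\sum_{i<j}|t_{ij}|^{2} = 0$, so every strictly upper triangular entry of $T$ must vanish and $T$ is in fact diagonal. Consequently $A = U T U^{\ast}$ with $T$ diagonal, which makes $A$ unitarily diagonalizable and therefore normal.

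I expect the converse to be the main obstacle: the forward implication is a direct computation once diagonalization is in hand, whereas the converse genuinely requires the extra leverage of Schur triangularization together with the Frobenius-type trace identity $\operatorname{tr}(AA^{\ast}) = \sum_{i,j}|t_{ij}|^{2}$. The one point demanding care is the multiplicity bookkeeping, namely reading the phrase "the eigenvalues of $AA^{\ast}$ are $|\lambda_{i}|^{2}$" as an equality of multisets, so that the trace—the sum of eigenvalues counted with multiplicity—indeed equals $\sum_{i}|\lambda_{i}|^{2}$ and the vanishing conclusion is legitimate.
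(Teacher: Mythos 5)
Your proof is correct. The paper does not actually prove this lemma at all --- it is quoted verbatim from reference [2] (Horn and Johnson, \emph{Matrix Analysis}) --- so there is no in-paper argument to compare against; your two-step argument (spectral theorem for the forward direction, Schur triangularization plus the Frobenius trace identity $\operatorname{tr}(AA^{\ast})=\sum_{i,j}|t_{ij}|^{2}$ for the converse) is exactly the standard textbook proof of this normality criterion, and your remark that the hypothesis must be read as a multiset equality so that $\operatorname{tr}(AA^{\ast})=\sum_{i}|\lambda_{i}|^{2}$ is precisely the point that makes the converse legitimate.
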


The following result gives us the eigenvalues of the matrix in (4).

\begin{theorem}
The eigenvalues of $C_{n}(G)\ $are%
\begin{equation*}
\lambda _{j}\left( C_{n}(G)\right) =\frac{G_{n+1}+\left(
G_{n+2}-PG_{n+1}-1\right) w^{-j}+RG_{n}w^{-2j}}{Pw^{-j}+Qw^{-2j}+Rw^{-3j}-1},
\end{equation*}%
where $G_{n}$ is the $n$th Generalized Tribonacci number and $P,\ Q,$ and $R$
are arbitrary integers.
\end{theorem}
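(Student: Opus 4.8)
The plan is to invoke the circulant eigenvalue formula of the first Lemma and then collapse the resulting geometric-type sum with the Tribonacci recurrence, using crucially that $w^{-j}$ is an $n$th root of unity. Since the first row of $C_{n}(G)$ is $(G_{1},G_{2},\dots,G_{n})$, the entry $a_{k}$ equals $G_{k+1}$ for $k=0,1,\dots,n-1$, so that Lemma gives
\[
\lambda_{j}\bigl(C_{n}(G)\bigr)=\sum_{k=0}^{n-1}G_{k+1}\,w^{-jk}.
\]
Setting $t=w^{-j}$ and $S=\sum_{k=0}^{n-1}G_{k+1}t^{k}$, it suffices to prove the polynomial identity
\[
S\bigl(Pt+Qt^{2}+Rt^{3}-1\bigr)=G_{n+1}+\bigl(G_{n+2}-PG_{n+1}-1\bigr)t+RG_{n}t^{2},
\]
for then dividing by the denominator yields the asserted closed form.

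First I would expand the left-hand side as $PtS+Qt^{2}S+Rt^{3}S-S$ and reindex each of the four sums so that its terms are read off as coefficients of powers $t^{m}$. For every exponent $m$ with $3\le m\le n-1$ all four sums contribute and the coefficient is $-G_{m+1}+PG_{m}+QG_{m-1}+RG_{m-2}$, which vanishes identically by the defining recurrence~(\ref{1}). Hence all interior powers cancel, and before any reduction the only surviving coefficients are those of $t^{0},t^{1},t^{2}$ together with the overshoot powers $t^{n},t^{n+1},t^{n+2}$.

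The key step is to fold the overshoot terms back using $t^{n}=w^{-jn}=e^{-2\pi i j}=1$, so that $t^{n},t^{n+1},t^{n+2}$ add into the coefficients of $t^{0},t^{1},t^{2}$ respectively. Collecting, the constant term becomes $-G_{1}+(PG_{n}+QG_{n-1}+RG_{n-2})=G_{n+1}$ by the recurrence and $G_{1}=0$; the coefficient of $t$ becomes $-G_{2}+PG_{1}+(QG_{n}+RG_{n-1})=(G_{n+2}-PG_{n+1})-1$, using $G_{n+2}=PG_{n+1}+QG_{n}+RG_{n-1}$ together with $G_{1}=0$ and $G_{2}=1$; and the coefficient of $t^{2}$ becomes $-G_{3}+PG_{2}+QG_{1}+RG_{n}=RG_{n}$, using $G_{3}=P$, $G_{2}=1$, $G_{1}=0$. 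This is exactly the claimed numerator.

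I expect the main obstacle to be the bookkeeping at the wrap-around: one must verify that the shifted summation ranges of $PtS$, $Qt^{2}S$ and $Rt^{3}S$ leave precisely the three overshoot terms $t^{n},t^{n+1},t^{n+2}$, and that after the reduction $t^{n}=1$ these land on $t^{0},t^{1},t^{2}$ without colliding with any interior index — which requires $n\ge 3$, implicit in the definition of $C_{n}(G)$. Once the displayed identity is established, the closed form follows at once on dividing by $Pt+Qt^{2}+Rt^{3}-1$, under the tacit assumption that this denominator is nonzero for the index $j$ under consideration.
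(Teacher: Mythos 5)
Your proof is correct, but it takes a genuinely different route from the paper. The paper starts from the same circulant eigenvalue formula $\lambda_j=\sum_{k=0}^{n-1}G_{k+1}w^{-jk}$, but then substitutes the Binet representation $G_{k+1}=\frac{\alpha^{k+1}}{X}+\frac{\beta^{k+1}}{Y}+\frac{\gamma^{k+1}}{Z}$, sums three geometric series using $(\alpha w^{-j})^{n}=\alpha^{n}$, and recombines the three resulting fractions over the common denominator $(\alpha w^{-j}-1)(\beta w^{-j}-1)(\gamma w^{-j}-1)=Pw^{-j}+Qw^{-2j}+Rw^{-3j}-1$, converting symmetric functions of $\alpha,\beta,\gamma$ back into $G_{n},G_{n+1},G_{n+2}$ via the relations $\alpha+\beta+\gamma=P$, etc. You instead multiply the generating polynomial $S=\sum_{k=0}^{n-1}G_{k+1}t^{k}$ by $Pt+Qt^{2}+Rt^{3}-1$, telescope the interior coefficients with the recurrence, and fold the overshoot terms $t^{n},t^{n+1},t^{n+2}$ back onto $t^{0},t^{1},t^{2}$ using $t^{n}=1$; I checked the resulting coefficients ($-G_{1}+G_{n+1}$, $-G_{2}+PG_{1}+QG_{n}+RG_{n-1}$, and $-G_{3}+PG_{2}+QG_{1}+RG_{n}$) and they do reduce to the claimed numerator. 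What your approach buys is that it never invokes the Binet formula, so it does not require the characteristic polynomial to have three distinct roots (an assumption the paper needs), and it avoids the partial-fraction bookkeeping; what the paper's approach buys is that it produces the denominator in its factored form $(\alpha w^{-j}-1)(\beta w^{-j}-1)(\gamma w^{-j}-1)$ directly, which is then reused in the determinant computation of the next theorem. Both arguments share the same tacit assumptions, namely that $Pw^{-j}+Qw^{-2j}+Rw^{-3j}-1\neq 0$ for each $j$ and that $n\geq 3$ so the wrap-around indices do not collide; neither you nor the paper addresses these explicitly, so this is not a defect of your write-up relative to the original.
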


\begin{proof}
By considering $X=\left( \alpha -\beta \right) \left( \alpha -\gamma \right)
,\ Y=\left( \beta -\alpha \right) \left( \beta -\gamma \right) ,\ Z=\left(
\gamma -\alpha \right) \left( \gamma -\beta \right) ,\ $from Lemma 2 and (%
\ref{1.2}), we have%
\begin{eqnarray*}
\lambda _{j}\left( C_{n}(G)\right)
&=&\sum_{k=0}^{n-1}G_{k+1}w^{-jk}=\sum_{k=0}^{n-1}\left( \frac{\alpha ^{k+1}%
}{X}+\frac{\beta ^{k+1}}{Y}+\frac{\gamma ^{k+1}}{Z}\right) w^{-jk} \\
&=&\frac{\alpha }{X}\left( \frac{\left( \alpha w^{-j}\right) ^{n}-1}{\alpha
w^{-j}-1}\right) +\frac{\beta }{Y}\left( \frac{\left( \beta w^{-j}\right)
^{n}-1}{\beta w^{-j}-1}\right) +\frac{\gamma }{Z}\left( \frac{\left( \gamma
w^{-j}\right) ^{n}-1}{\gamma w^{-j}-1}\right) \\
&=&\frac{\alpha ^{n+1}-\alpha }{X\left( \alpha w^{-j}-1\right) }+\frac{\beta
^{n+1}-\beta }{Y\left( \beta w^{-j}-1\right) }+\frac{\gamma ^{n+1}-\gamma }{%
Z\left( \gamma w^{-j}-1\right) }.
\end{eqnarray*}%
By using (\ref{1.2}) and (\ref{1.1}), we obtain%
\begin{equation*}
\lambda _{j}\left( C_{n}(G)\right) =\frac{G_{n+1}+\left(
G_{n+2}-PG_{n+1}-1\right) w^{-j}+RG_{n}w^{-2j}}{Pw^{-j}+Qw^{-2j}+Rw^{-3j}-1},
\end{equation*}%
as required.
\end{proof}

The following theorem gives us the spectral norm of $C_{n}(G)$.

\begin{theorem}
For matrix in (\ref{1.3}), we have%
\begin{equation*}
\left\Vert C_{n}(G)\right\Vert _{2}=\frac{G_{n+2}+\left( 1-P\right)
G_{n+1}+RG_{n}-1}{P+Q+R-1},
\end{equation*}%
where $P,\ Q,$ and $R$ are arbitrary integers and $P+Q+R-1\neq 0.$
\end{theorem}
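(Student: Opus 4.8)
The plan is to exploit the fact that the spectral norm of a normal matrix coincides with its spectral radius, so that the computation reduces to identifying the eigenvalue of largest modulus among those already found for $C_{n}(G)$. First I would note that $C_{n}(G)$, being circulant, is a normal matrix: every circulant matrix is diagonalized by the (unitary) Fourier matrix, and hence commutes with its conjugate transpose. Therefore Lemma 3 applies, and the eigenvalues of $C_{n}(G)C_{n}(G)^{\ast}$ are exactly $\left\vert \lambda_{0}\right\vert ^{2},\ldots ,\left\vert \lambda_{n-1}\right\vert ^{2}$, where $\lambda_{j}=\lambda_{j}\left( C_{n}(G)\right) $. Since $C_{n}(G)^{\ast}C_{n}(G)$ and $C_{n}(G)C_{n}(G)^{\ast}$ share the same eigenvalues, the definition of the spectral norm gives
\[
\left\Vert C_{n}(G)\right\Vert _{2}=\sqrt{\max_{0\leq j\leq n-1}\left\vert \lambda_{j}\right\vert ^{2}}=\max_{0\leq j\leq n-1}\left\vert \lambda_{j}\right\vert .
\]

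Next I would compute the candidate maximiser $\lambda_{0}$, corresponding to $j=0$. Setting $w^{-j}=w^{0}=1$ in the eigenvalue formula of the preceding theorem collapses the denominator to $P+Q+R-1$ and the numerator to $G_{n+2}+\left( 1-P\right) G_{n+1}+RG_{n}-1$, which is precisely the asserted value. As a consistency check, Lemma 2 with $j=0$ gives $\lambda_{0}=\sum_{k=0}^{n-1}G_{k+1}$, the common row sum of $C_{n}(G)$, so the claimed expression is simply the closed form for $G_{1}+G_{2}+\cdots +G_{n}$ obtained by summing the recurrence (\ref{1}).

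The main obstacle is to show that this $j=0$ eigenvalue is in fact the one of largest modulus, i.e. that $\max_{j}\left\vert \lambda_{j}\right\vert $ is attained at $j=0$. For this I would return to the representation $\lambda_{j}=\sum_{k=0}^{n-1}G_{k+1}w^{-jk}$ of Lemma 2 and apply the triangle inequality together with $\left\vert w^{-jk}\right\vert =1$ to obtain $\left\vert \lambda_{j}\right\vert \leq \sum_{k=0}^{n-1}\left\vert G_{k+1}\right\vert $. Provided the Generalized Tribonacci numbers are nonnegative, the right-hand side equals $\sum_{k=0}^{n-1}G_{k+1}=\lambda_{0}$, with equality realised at $j=0$; hence $\left\Vert C_{n}(G)\right\Vert _{2}=\lambda_{0}$, which is the stated formula.

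I would emphasise that the delicate point is exactly this nonnegativity: it holds whenever $P,Q,R\geq 0$, since then the recurrence (\ref{1}) propagates nonnegativity from the data $G_{0}=G_{1}=0$, $G_{2}=1$, and under this hypothesis $P+Q+R-1>0$ also makes $\lambda_{0}$ positive so that no absolute value is needed in the final expression. Although the statement says $P,Q,R$ are arbitrary integers, the identity as written genuinely requires these sign conditions (ensuring the entries are nonnegative and $P+Q+R-1\neq 0$); I would make that standing assumption explicit in carrying out the triangle-inequality step, since without it the bound $\left\vert \lambda_{j}\right\vert \leq \sum_{k}\left\vert G_{k+1}\right\vert $ need no longer be tight at $j=0$.
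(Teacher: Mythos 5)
Your proof follows essentially the same route as the paper: use normality of the circulant matrix to reduce the spectral norm to $\max_{0\le j\le n-1}\left\vert \lambda_{j}\left( C_{n}(G)\right) \right\vert$, then evaluate the eigenvalue formula at $j=0$. The one place you go beyond the paper is welcome: the paper simply asserts that $\lambda_{0}$ is the eigenvalue of largest modulus, whereas you justify this via the triangle inequality $\left\vert \lambda_{j}\right\vert \le \sum_{k}\left\vert G_{k+1}\right\vert$, and you correctly observe that this step (and hence the theorem as stated) silently requires the entries to be nonnegative --- e.g. $P,Q,R\ge 0$ --- rather than holding for arbitrary integers as claimed.
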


\begin{proof}
We know that matrix $C_{n}(G)$ in (\ref{1.3}) is a normal matrix since $%
C_{n}(G)^{\ast }C_{n}(G)=C_{n}(G)C_{n}(G)^{\ast }$. Now, from Lemma 3, we
can write%
\begin{equation*}
\left\Vert C_{n}(G)\right\Vert _{2}=\sqrt{\left( \max\limits_{0\leq j\leq
n-1}\left\vert \lambda _{j}\left( C_{n}(G)\right) \right\vert ^{2}\right) }.
\end{equation*}%
In this last equality, if we take $j=0$, then $\lambda _{0}$ \ becomes \ the
maximum eigenvalue. Thus, $\left\Vert C_{n}(G)\right\Vert _{2}=\left\vert
\lambda _{0}\left( C_{n}(G)\right) \right\vert .$ In addition, Theorem 4, we
clearly obtain%
\begin{equation*}
\left\Vert C_{n}(G)\right\Vert _{2}=\frac{G_{n+2}+\left( 1-P\right)
G_{n+1}+RG_{n}-1}{P+Q+R-1}.
\end{equation*}%
Hence, proof is completed.
\end{proof}

In particular cases:

\begin{itemize}
\item Taking $P=Q=R=1$, then we get $\left\Vert C_{n}(T)\right\Vert _{2}=%
\frac{T_{n+1}+T_{n-1}-1}{2},\ $where$\ T_{n}$ is $n$th the classical
Tribonacci number.

\item Taking $P=0,\ Q=R=1$, then we get $\left\Vert C_{n}(A)\right\Vert
_{2}=A_{n+1}-1,\ $where$\ A_{n}$ is $n$th the Padovan number.

\item Taking $P=Q=1,~R=0$, then we get $\left\Vert C_{n}(F)\right\Vert
_{2}=F_{n+1}-1$ in \cite{3}$,\ $where$~F_{n}$ is $n$th the classical
Fibonacci number.

\item Taking $P=k,~Q=1,~R=0$, then we get $\left\Vert
C_{n}(F_{k})\right\Vert _{2}=\frac{F_{k.n}+F_{k,n-1}-1}{k},$\linebreak where$%
~F_{k,n}$ is $n$th the $k-$Fibonacci number, that is $%
F_{k,n}=kF_{k,n-1}+F_{k,n-2}$ with initial conditions $F_{k,0}=0$ and $%
F_{k,1}=1$.

\item Taking $P=2,\ Q=1,\ R=0$, then we get $\left\Vert C_{n}(B)\right\Vert
_{2}=\frac{B_{n+1}-B_{n}-1}{2},$\linebreak where$\ B_{n}$ is $n$th the
classical Pell number.

\item Taking $P=1,\ Q=2,\ R=0$, then we get $\left\Vert C_{n}(J)\right\Vert
_{2}=\frac{J_{n+1}-1}{2},\ $ where$\ J_{n}$ is $n$th the Jacobsthal number.
\end{itemize}

\begin{theorem}
The determinant \ of \ the matrix $C_{n}(G)=circ\left( G_{1},G_{2},\ldots
,G_{n}\right) $ is written by 
\begin{equation*}
\det (C_{n}(G))=\frac{\left( G_{n+1}\right) ^{n}\left(
1-K^{n}-L^{n}+K^{n}L^{n}\right) }{\left( -1\right) ^{n}\left(
1-S_{n+1}-R^{n}+M\right) },
\end{equation*}%
where%
\begin{eqnarray*}
K &=&\dfrac{PG_{n+1}-G_{n+2}+1-\sqrt{\left( PG_{n+1}-G_{n+2}+1\right)
^{2}-4RG_{n}G_{n+1}}}{2G_{n+1}}, \\
L &=&\dfrac{PG_{n+1}-G_{n+2}+1+\sqrt{\left( PG_{n+1}-G_{n+2}+1\right)
^{2}-4RG_{n}G_{n+1}}}{2G_{n+1}} \\
M &=&\alpha ^{n}\beta ^{n}+\beta ^{n}\gamma ^{n}+\alpha ^{n}\gamma ^{n}.
\end{eqnarray*}
\end{theorem}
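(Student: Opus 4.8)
The plan is to exploit the fact that the determinant of a matrix is the product of its eigenvalues. Since Theorem 4 already supplies all $n$ eigenvalues of $C_{n}(G)$, I would begin by writing
\begin{equation*}
\det(C_{n}(G))=\prod_{j=0}^{n-1}\lambda_{j}(C_{n}(G))=\prod_{j=0}^{n-1}\frac{G_{n+1}+\left(G_{n+2}-PG_{n+1}-1\right)w^{-j}+RG_{n}w^{-2j}}{Pw^{-j}+Qw^{-2j}+Rw^{-3j}-1}.
\end{equation*}
Everything then reduces to factoring the numerator and denominator of each eigenvalue into linear factors in $w^{-j}$ and evaluating the resulting products over the $n$-th roots of unity. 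The key evaluation tool is the elementary identity $\prod_{j=0}^{n-1}\left(1-c\,w^{-j}\right)=1-c^{n}$, valid because $\left\{w^{-j}\right\}_{j=0}^{n-1}$ runs through all $n$-th roots of unity and $\prod_{\zeta^{n}=1}(x-\zeta)=x^{n}-1$.

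For the numerator, I would view it as a quadratic in $t=w^{-j}$ and observe that the quantities $K$ and $L$ defined in the statement are precisely the two roots of $G_{n+1}t^{2}-\left(PG_{n+1}-G_{n+2}+1\right)t+RG_{n}$. Vieta's formulas then give $K+L=\left(PG_{n+1}-G_{n+2}+1\right)/G_{n+1}$ and $KL=RG_{n}/G_{n+1}$, and a direct expansion confirms that the numerator equals $G_{n+1}\left(1-Kw^{-j}\right)\left(1-Lw^{-j}\right)$. Multiplying over $j$ and applying the identity to each factor yields
\begin{equation*}
\prod_{j=0}^{n-1}\left(G_{n+1}+\left(G_{n+2}-PG_{n+1}-1\right)w^{-j}+RG_{n}w^{-2j}\right)=G_{n+1}^{\,n}\left(1-K^{n}\right)\left(1-L^{n}\right),
\end{equation*}
which expands to the claimed numerator $G_{n+1}^{\,n}\left(1-K^{n}-L^{n}+K^{n}L^{n}\right)$.

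For the denominator, I would use the relations $\alpha+\beta+\gamma=P$, $\alpha\beta+\beta\gamma+\alpha\gamma=-Q$, $\alpha\beta\gamma=R$ from (\ref{1.1}) to recognise that, with $t=w^{-j}$,
\begin{equation*}
Pt+Qt^{2}+Rt^{3}-1=-\left(1-\alpha t\right)\left(1-\beta t\right)\left(1-\gamma t\right),
\end{equation*}
this being the reciprocal of the characteristic polynomial $\left(x-\alpha\right)\left(x-\beta\right)\left(x-\gamma\right)$. Taking the product over $j$ and applying the root-of-unity identity to each of the three linear factors gives
\begin{equation*}
\prod_{j=0}^{n-1}\left(Pw^{-j}+Qw^{-2j}+Rw^{-3j}-1\right)=\left(-1\right)^{n}\left(1-\alpha^{n}\right)\left(1-\beta^{n}\right)\left(1-\gamma^{n}\right).
\end{equation*}
Expanding and substituting $\alpha^{n}\beta^{n}\gamma^{n}=\left(\alpha\beta\gamma\right)^{n}=R^{n}$, $M=\alpha^{n}\beta^{n}+\beta^{n}\gamma^{n}+\alpha^{n}\gamma^{n}$, and the Generalized Tribonacci--Lucas power sum $S_{n+1}=\alpha^{n}+\beta^{n}+\gamma^{n}$ converts this into $\left(-1\right)^{n}\left(1-S_{n+1}-R^{n}+M\right)$. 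Dividing the numerator product by the denominator product then produces the stated formula.

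The two factorizations are routine, so I expect the main obstacle to be the careful bookkeeping of signs and of the $\left(-1\right)^{n}$ factor: one must verify that the cubic denominator carries an overall minus sign, so that $\left(-1\right)^{n}$ ends up in the denominator rather than the numerator, and that the root-of-unity identity introduces no extra sign. A second point to pin down is the indexing of the Generalized Tribonacci--Lucas sequence, since the formula requires $S_{n+1}$ --- and not $S_{n}$ --- to equal $\alpha^{n}+\beta^{n}+\gamma^{n}$; matching this against the definition of $\left\{S_{n}\right\}$ is what fixes the index appearing in the final denominator.
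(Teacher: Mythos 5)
Your proposal is correct and follows essentially the same route as the paper: expressing $\det(C_{n}(G))$ as the product of the eigenvalues from Theorem 4, splitting the quadratic numerator into the linear factors determined by $K$ and $L$ and the cubic denominator into the factors determined by $\alpha,\beta,\gamma$, and then evaluating each linear factor's product over the $n$th roots of unity via $\prod_{k=0}^{n-1}(x-yw^{-k})=x^{n}-y^{n}$. The only cosmetic difference is that the paper packages the numerator computation as a ready-made product identity for $x-yw^{-k}+zw^{-2k}$, whereas you rederive it through Vieta's formulas, and you make explicit the identification $S_{n+1}=\alpha^{n}+\beta^{n}+\gamma^{n}$ that the paper leaves implicit.
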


\begin{proof}
From Theorem 4, we have 
\begin{eqnarray*}
\det (C_{n}(G)) &=&\prod\limits_{j=0}^{n-1}\lambda _{j}\left( C_{n}(G)\right)
\\
&=&\prod\limits_{j=0}^{n-1}\frac{G_{n+1}-\left( PG_{n+1}-G_{n+2}+1\right)
w^{-j}+RG_{n}w^{-2j}}{\left( \alpha w^{-j}-1\right) \left( \beta
w^{-j}-1\right) \left( \gamma w^{-j}-1\right) }.
\end{eqnarray*}%
By considering identities $\prod\limits_{k=0}^{n-1}\left( x-yw^{-k}\right)
=x^{n}-y^{n}$ and 
\begin{equation*}
\prod\limits_{k=0}^{n-1}\left( x-yw^{-k}+zw^{-2k}\right) =x^{n}\left( 
\begin{array}{c}
1-\left( \frac{y-\sqrt{y^{2}-4xz}}{2x}\right) ^{n}-\left( \frac{y+\sqrt{%
y^{2}-4xz}}{2x}\right) ^{n} \\ 
\ \ \ \ \ \ +\left( \frac{y-\sqrt{y^{2}-4xz}}{2x}\right) ^{n}\left( \frac{y+%
\sqrt{y^{2}-4xz}}{2x}\right) ^{n}%
\end{array}%
\right) ,
\end{equation*}%
\ we can write%
\begin{equation*}
\det (C_{n}(G))=\frac{\left( G_{n+1}\right) ^{n}\left(
1-K^{n}-L^{n}+K^{n}L^{n}\right) }{\left( -1\right) ^{n}\left(
1-S_{n+1}-R^{n}+M\right) }.
\end{equation*}
\end{proof}

\section{Circulant Matrix with Generalized Tribonacci-Lucas numbers}

In this section, we\ firstly define Generalized Tribonacci-Lucas sequence $%
\left\{ S_{n}\right\} _{n\in 
%TCIMACRO{\U{2115} }%
%BeginExpansion
\mathbb{N}
%EndExpansion
}.$Then we compute the eigenvalues, norm and determinant of circulant matrix
with the Generalized Tribonacci-Lucas sequence $\left\{ S_{n}\right\} _{n\in 
%TCIMACRO{\U{2115} }%
%BeginExpansion
\mathbb{N}
%EndExpansion
}.$

\begin{definition}
For $R\neq 0,\ $the Generalized Tribonacci-Lucas sequence $\left\{
S_{n}\right\} _{n\in 
%TCIMACRO{\U{2115} }%
%BeginExpansion
\mathbb{N}
%EndExpansion
}$\ is defined by 
\begin{equation}
S_{n+3}=PS_{n+2}+QS_{n+1}+RS_{n},\ \ \ \ \ S_{0}=-\frac{Q}{R},\ S_{1}=3,\
S_{2}=P.  \label{1.25}
\end{equation}
\end{definition}

Binet formula enables us to state the Generalized Tribonacci-Lucas numbers.
It can be clearly obtained from the three distinct real roots $\alpha $, $%
\beta $ and $\gamma $ of characteristic equation of (\ref{1.25}) as$\ $%
\begin{equation}
S_{n}=\alpha ^{n-1}+\beta ^{n-1}+\gamma ^{n-1}.  \label{1.26}
\end{equation}

Note that, $\alpha $, $\beta $ and $\gamma $ are the same in (\ref{1.1}).

\begin{definition}
The $n\times n$ circulant matrix with the Generalized Tribonacci numbers
entries is defined by
\end{definition}

\begin{equation}
C_{n}(S)=\left[ 
\begin{array}{ccccc}
S_{1} & S_{2} & S_{3} & \cdots & S_{n} \\ 
S_{n} & S_{1} & S_{2} & \cdots & S_{n-1} \\ 
S_{n-1} & S_{n} & S_{1} & \cdots & S_{n-2} \\ 
\vdots & \vdots & \vdots & \ddots & \vdots \\ 
S_{2} & S_{3} & S_{4} & \cdots & S_{1}%
\end{array}%
\right] .  \label{1.5}
\end{equation}
The following result gives us the eigenvalues of the matrix in (\ref{1.5}).

\begin{theorem}
For $w=e^{\frac{2\pi i}{n}},\ i=\sqrt{-1}$ and $j=0,1,\ldots ,n-1,$ the
eigenvalues of $C_{n}(S)\ $are%
\begin{equation*}
\lambda _{j}\left( C_{n}(S)\right) =\frac{S_{n+1}-3+\left(
S_{n+2}-PS_{n+1}+2P\right) w^{-j}+\left( RS_{n}+Q\right) w^{-2j}}{%
Pw^{-j}+Qw^{-2j}+Rw^{-3j}-1},
\end{equation*}%
where $S_{n}$ is the $n$th Generalized Tribonacci-Lucas number and $P,\ Q,$
and $R$ are arbitrary integers.
\end{theorem}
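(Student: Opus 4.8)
The plan is to follow the template of the proof of Theorem 4, replacing the Tribonacci Binet expression by the simpler Tribonacci--Lucas one. First I would invoke Lemma 2 with $a_{k}=S_{k+1}$, since the first row of $C_{n}(S)$ is $(S_{1},S_{2},\ldots ,S_{n})$, so that $\lambda _{j}\left( C_{n}(S)\right) =\sum_{k=0}^{n-1}S_{k+1}w^{-jk}$. Substituting the Binet formula (\ref{1.26}) in the shifted form $S_{k+1}=\alpha ^{k}+\beta ^{k}+\gamma ^{k}$ splits this single sum into three geometric sums with ratios $\alpha w^{-j}$, $\beta w^{-j}$, $\gamma w^{-j}$. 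Because $w=e^{2\pi i/n}$ we have $w^{-jn}=1$, hence $(\alpha w^{-j})^{n}=\alpha ^{n}$ and
\begin{equation*}
\sum_{k=0}^{n-1}(\alpha w^{-j})^{k}=\frac{\alpha ^{n}-1}{\alpha w^{-j}-1},
\end{equation*}
and likewise for $\beta $ and $\gamma $. Thus $\lambda _{j}\left( C_{n}(S)\right) $ is the sum of the three fractions $(\alpha ^{n}-1)/(\alpha w^{-j}-1)$, $(\beta ^{n}-1)/(\beta w^{-j}-1)$, $(\gamma ^{n}-1)/(\gamma w^{-j}-1)$, which I would place over the common denominator $(\alpha w^{-j}-1)(\beta w^{-j}-1)(\gamma w^{-j}-1)$.

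The denominator is immediate: expanding it and applying the relations (\ref{1.1}) yields $Rw^{-3j}+Qw^{-2j}+Pw^{-j}-1$, which is precisely the claimed denominator. The substantive work, and where I expect the main obstacle to lie, is the numerator. Writing $u=w^{-j}$ and collecting the expanded numerator by powers of $u$, the coefficient of $u^{2}$ is $\alpha ^{n}\beta \gamma +\beta ^{n}\alpha \gamma +\gamma ^{n}\alpha \beta -(\alpha \beta +\beta \gamma +\gamma \alpha )$; factoring $\alpha \beta \gamma =R$ out of the first group turns it into $R(\alpha ^{n-1}+\beta ^{n-1}+\gamma ^{n-1})=RS_{n}$, while the second group equals $Q$ by (\ref{1.1}), giving $RS_{n}+Q$. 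The coefficient of $u$ is $-\sum_{\mathrm{cyc}}(\alpha ^{n}-1)(\beta +\gamma )$; here the key step is the Newton-type identity $\sum_{\mathrm{sym}}\alpha ^{n}\beta =(\alpha +\beta +\gamma )(\alpha ^{n}+\beta ^{n}+\gamma ^{n})-(\alpha ^{n+1}+\beta ^{n+1}+\gamma ^{n+1})=PS_{n+1}-S_{n+2}$, combined with $(\beta +\gamma )+(\gamma +\alpha )+(\alpha +\beta )=2P$, which together produce $S_{n+2}-PS_{n+1}+2P$. Finally the constant term is $(\alpha ^{n}+\beta ^{n}+\gamma ^{n})-3=S_{n+1}-3$.

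Assembling these three coefficients over the denominator reproduces the stated expression for $\lambda _{j}\left( C_{n}(S)\right) $. The only delicate part is the careful bookkeeping in the numerator expansion and the recognition of the symmetric combinations as $S_{n}$, $S_{n+1}$, and $S_{n+2}$ via the Binet formula (\ref{1.26}) and the symmetric-function relations (\ref{1.1}); once these identifications are in place the result follows directly.
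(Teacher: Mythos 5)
Your proposal is correct and follows essentially the same route as the paper's proof: apply Lemma 2 to the first row $(S_{1},\ldots ,S_{n})$, substitute the Binet form $S_{k+1}=\alpha ^{k}+\beta ^{k}+\gamma ^{k}$, sum the three geometric series using $w^{-jn}=1$, and combine over the common denominator via the symmetric-function relations (\ref{1.1}). In fact you supply more detail than the paper does, since the published proof compresses the entire numerator computation into the phrase ``By using (\ref{1.1}), we obtain,'' whereas you carry out the coefficient-by-coefficient identification of $S_{n+1}-3$, $S_{n+2}-PS_{n+1}+2P$, and $RS_{n}+Q$ explicitly and correctly.
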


\begin{proof}
By considering Lemma 2 and (\ref{1.26}), we have%
\begin{eqnarray*}
\lambda _{j}\left( C_{n}(S)\right)
&=&\sum_{k=0}^{n-1}S_{k+1}w^{-jk}=\sum_{k=0}^{n-1}\left( \alpha ^{k}+\beta
^{k}+\gamma ^{k}\right) w^{-jk} \\
&=&\frac{\alpha ^{n}-1}{\alpha w^{-j}-1}+\frac{\beta ^{n}-1}{\beta w^{-j}-1}+%
\frac{\gamma ^{n}-1}{\gamma w^{-j}-1}.
\end{eqnarray*}%
By using (\ref{1.1})$,\ $ we obtain%
\begin{equation*}
\lambda _{j}\left( C_{n}(S)\right) =\frac{S_{n+1}-3+\left(
S_{n+2}-PS_{n+1}+2P\right) w^{-j}+\left( RS_{n}+Q\right) w^{-2j}}{%
Pw^{-j}+Qw^{-2j}+Rw^{-3j}-1}.
\end{equation*}
\end{proof}

\begin{theorem}
For the matrix in (\ref{1.5}), the following equalities hold:

\begin{description}
\item[\textbf{i)}] $\left\Vert C_{n}(S)\right\Vert _{2}=\frac{S_{n+2}+\left(
1-P\right) S_{n+1}+RS_{n}+2P+Q-3}{P+Q+R-1},$

\item where $P,\ Q,$ and $R$ are arbitrary integers and $P+Q+R-1\neq 0.$

\item[\textbf{ii)}] $\det (C_{n}(S))=\frac{\left( S_{n+1}-3\right) ^{n}\left[
1-C^{n}-D^{n}+C^{n}D^{n}\right] }{\left( -1\right) ^{n}\left[
1-S_{n+1}-R^{n}+E\right] },$

\item where 
\begin{eqnarray*}
C &=&\dfrac{S_{n+2}-PS_{n+1}+2P-\sqrt{\left( S_{n+2}-PS_{n+1}+2P\right)
^{2}-4\left( S_{n+1}-3\right) \left( RS_{n}+Q\right) }}{2\left(
S_{n+1}-3\right) }, \\
D &=&\dfrac{S_{n+2}-PS_{n+1}+2P+\sqrt{\left( S_{n+2}-PS_{n+1}+2P\right)
^{2}-4\left( S_{n+1}-3\right) \left( RS_{n}+Q\right) }}{2\left(
S_{n+1}-3\right) }, \\
E &=&\alpha ^{n}\beta ^{n}+\beta ^{n}\gamma ^{n}+\alpha ^{n}\gamma ^{n}.
\end{eqnarray*}
\end{description}
\end{theorem}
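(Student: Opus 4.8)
The plan is to mirror the treatment of $C_n(G)$ in Theorems 5 and 6, feeding in the eigenvalue formula for $C_n(S)$ just established. The matrix $C_n(S)$ is circulant, hence normal, so Lemma 3 gives $\|C_n(S)\|_2=\sqrt{\max_{0\le j\le n-1}|\lambda_j(C_n(S))|^2}=\max_{0\le j\le n-1}|\lambda_j(C_n(S))|$, while $\det(C_n(S))=\prod_{j=0}^{n-1}\lambda_j(C_n(S))$. Both parts then reduce to algebra on the closed form for $\lambda_j(C_n(S))$.

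For part (i) I would argue, exactly as in Theorem 5, that the maximum modulus is attained at $j=0$, where $w^{-j}=1$. Setting $w^{-j}=1$ in the eigenvalue formula collapses the denominator to $P+Q+R-1$ and the numerator to $(S_{n+1}-3)+(S_{n+2}-PS_{n+1}+2P)+(RS_n+Q)$; collecting terms yields $S_{n+2}+(1-P)S_{n+1}+RS_n+2P+Q-3$, which is the asserted value of $\|C_n(S)\|_2$. The only nonmechanical point is the claim that $\lambda_0$ dominates in modulus, which I would justify as in the $G$-case.

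For part (ii) I would split $\prod_{j=0}^{n-1}\lambda_j(C_n(S))$ into its numerator and denominator products. Writing each eigenvalue denominator as $(\alpha w^{-j}-1)(\beta w^{-j}-1)(\gamma w^{-j}-1)$ --- equal to $Pw^{-j}+Qw^{-2j}+Rw^{-3j}-1$ by (\ref{1.1}) --- and applying $\prod_{k=0}^{n-1}(x-yw^{-k})=x^n-y^n$ to each linear factor turns the denominator product into $(-1)^n(1-\alpha^n)(1-\beta^n)(1-\gamma^n)$. Expanding this and using the Binet formula (\ref{1.26}) in the form $\alpha^n+\beta^n+\gamma^n=S_{n+1}$, together with $\alpha\beta\gamma=R$ and $E=\alpha^n\beta^n+\beta^n\gamma^n+\alpha^n\gamma^n$, gives $(-1)^n(1-S_{n+1}-R^n+E)$, the claimed denominator. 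For the numerator I would apply the quadratic product identity from the proof of the determinant of $C_n(G)$ (Theorem 6) with $x=S_{n+1}-3$, $z=RS_n+Q$ and the linear coefficient read off from the eigenvalue formula, so that the two roots $\tfrac{y\pm\sqrt{y^2-4xz}}{2x}$ match $C$ and $D$; this produces $(S_{n+1}-3)^n(1-C^n-D^n+C^nD^n)$. Dividing the two products gives the stated determinant.

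I expect the numerator product in (ii) to be the main obstacle: one must match the sign of the linear coefficient $S_{n+2}-PS_{n+1}+2P$ to the form $x-yw^{-j}+zw^{-2j}$ demanded by the identity and then check that the resulting discriminant and the two roots coincide with the definitions of $C$ and $D$. This is exactly the place where a sign slip would propagate into the $C^n$ and $D^n$ terms, so I would cross-check the formula on the small cases $n=1,2$ before trusting it in general; everything else follows routinely once the eigenvalues are in hand.
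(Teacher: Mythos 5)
Your proposal follows exactly the route the paper intends: the paper's own proof of this theorem is the single sentence ``Proof of this theorem can be seen by using Theorem 9,'' and your argument supplies precisely the details (normality plus maximality at $j=0$ for the norm, the two product identities for the determinant) that the analogous Theorems 5 and 6 use for $C_n(G)$. The sign caveat you flag in the numerator product is a genuine point of care, since matching the identity's form $x-yw^{-j}+zw^{-2j}$ forces $y=-\left(S_{n+2}-PS_{n+1}+2P\right)$, so the cross-check on small $n$ you propose is well advised.
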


\begin{proof}
Proof of this theorem can be seen by using Theorem 9.
\end{proof}

In particular cases:

\begin{itemize}
\item Taking $P=Q=R=1$, then we get $\left\Vert C_{n}(Y)\right\Vert _{2}=%
\frac{Y_{n+1}+Y_{n-1}}{2},\ $where$\ Y_{n}$ is $n$th the classical
Tribonacci-Lucas number.

\item Taking $P=0,\ Q=R=1$, then we get $\left\Vert C_{n}(Z)\right\Vert
_{2}=Z_{n+4}-2,\ $where$\ Z_{n}$ is $n$th the Perrin number.
\end{itemize}

\end{document}